\theoremstyle{definition}
\newtheorem{definition}{Definition}
\newtheorem{theorem}{Theorem}
\newtheorem{lemma}{Lemma}
\newtheorem{assumption}{Assumption}
\newtheorem{corollary}{Corollary}
\newtheorem{problem}{Problem}
\title{\LARGE \bf
Energy-Optimal Motion Planning for Agents: Barycentric Motion and Collision Avoidance Constraints
}
\author{Logan E. Beaver, \emph{Student Member, IEEE}, Michael Dorothy,\\Christopher Kroninger, Andreas A. Malikopoulos, \emph{Senior Member, IEEE}
\thanks{This research was supported by Combat Capabilities Development Command, Army Research Laboratory, MD, USA.}
	\thanks{Logan and Andreas are with the Department of Mechanical Engineering, University of Delaware, Newark, DE, USA.
	Michael and Christopher are with Combat Capabilities Development Command, Army Research Laboratory, MD, USA. (emails: \tt\small{lebeaver@udel.edu};  \tt\small{michael.r.dorothy.civ@mail.mil};  \tt\small{christopher.m.kroninger.civ@mail.mil}; \tt\small{andreas@udel.edu}.)}%
}
\begin{document}

\maketitle

\begin{abstract}

As robotic swarm systems emerge, it is increasingly important to provide strong guarantees on energy consumption and safety to maximize system performance.
One approach to achieve these guarantees is through constraint-driven control, where agents seek to minimize energy consumption subject to a set of safety and task constraints.
In this paper, we provide a sufficient and necessary condition for an energy-minimizing agent with integrator dynamics to have a continuous control input at the transition between unconstrained and constrained trajectories.
In addition, we present and analyze barycentric motion and collision avoidance constraints to be used in constraint-driven control of swarms.
\end{abstract}

\section{Introduction}

Control of swarms systems is an emerging topic in the fields of controls and robotics.
Due to their adaptability and flexibility \cite{Oh2017}, swarm systems have attracted considerable attention in transportation \cite{chalaki2020experimental}, construction \cite{Lindsey2012ConstructionTeams}, and surveillance\cite{Corts2009} applications.
As we advance to experimental swarm testbeds \cite{Pickem2017TheTestbed, Rubenstein2012} and outdoor experiments \cite{Vasarhelyi2018OptimizedEnvironments}, it is critical to minimize the cost per agent in the swarm by considering energy-minimizing algorithms with strong guarantees on safety and performance.

Safety and performance have recently been explored in the context of control barrier functions \cite{Notomista2019Constraint-DrivenSystems}, where agents react to the environment in real-time to satisfy task and safety constraints.
In this paper, we seek to advance the state of the art in real-time optimal control by providing: (1) a necessary and sufficient condition for continuity of the control input of an energy-minimizing agent with integrator dynamics, (2) a barycentric motion constraint that guarantees an agent's arrival to a desired set in finite time, and (3) a system of equations involving only the state and control variables that guarantees optimality for the barycentric motion and collision avoidance constraints.

Our first contribution has been explored on a case-by-case basis \cite{Zhang2019JointGuarantees,Malikopoulos2020,Beaver2020BeyondFlocking}; however, there has been no general continuity result reported in the literature.
Our main result is applicable as a coarse high-level plan for many systems, particularly with applications in connected and automated vehicles \cite{Mahbub2020decentralized} and drone swarming \cite{Beaver2020AnFlocking}.
Our barycentric motion constraint is inspired by the distributed formation control law presented in \cite{Fathian2019RobustVehicles}. 
In particular, it was shown that swarms of agents constrained to move toward a target point, e.g., a barycenter, are very robust to noise and disturbances. This is partially due to the large space of feasible control inputs.
Due to this property, we believe barycentric motion is a natural approach to drive agents into a desired set under the constraint-driven paradigm.

The remainder of the paper is organized as follows. In Section \ref{sec:theoretical}, we provide our main result that gives sufficient and necessary conditions for our proposed agent to have a continuous control input. In Section \ref{sec:barycentric}, we propose the barycentric motion constraint, and in Section \ref{sec:solution}, we derive the corresponding optimal motion primitive. In Section \ref{sec:jump}, we derive the remaining optimality conditions in terms of the state and control of the agent, and in Section \ref{sec:fixedDistance}, we present the optimality conditions for the agent to constrain itself to the surface of a closed disk, and equivalently, collision avoidance. Finally, we draw concluding remarks and future work in Section \ref{sec:conclusion}.

\section{Main Result} \label{sec:theoretical}
Consider a dynamical system $\mathcal{S}(t)$ with $k$ states in $\mathbb{R}^n$ at time $t\in\mathbb{R}$,
\begin{equation} \label{eq:states}
    \mathcal{S}(t) = \{\mathbf{x}_1(t), \mathbf{x}_2(t), \dots, \mathbf{x}_k(t)\},
\end{equation}
where $\mathbf{x}_i(t)\in\mathbb{R}^n, ~i=1,\ldots,k, ~k\in\mathbb{N}.$ Let the system obey integrator dynamics, i.e.,
\begin{equation} \label{eq:k-dynamics}
    \dot{\mathbf{x}}_p(t) = \begin{cases}
    \mathbf{x}_{p+1}(t), & \text{ if } p\in\{1, 2, \dots, k-1\}, \\
    \mathbf{u}(t), & \text{ if } p = k,
    \end{cases}
\end{equation}
where $\mathbf{u}(t)\in\mathbb{R}^n$ is the bounded control input, i.e., $||\mathbf{u}(\cdot)|| < \infty$.
Let $e(t)$ be the rate of energy consumption of the system given by 
\begin{equation} \label{eq:energy}
    e(t) = \frac{1}{2}||\mathbf{u}(t)||^2,
\end{equation}
i.e., rate of energy consumption is proportional to the $\mathcal{L}^2$ norm of the control input.
Finally, we impose the following constraint,
\begin{equation} \label{eq:constraint}
    g(\mathbf{x}(t),t) \leq 0,
\end{equation}
where \eqref{eq:constraint} is a class $C^{q-1}$ function, and $q\in\mathbb{N}$ is the minimum number of time derivatives of $g(\mathbf{x}(t),t)$ required for $\mathbf{u}(t)$ to appear in $g^{(q)}(\mathbf{x}(t), t)$.

\begin{lemma} \label{lma:uniqueness}
Given real vectors $\mathbf{a}, \mathbf{b} \in\mathbb{R}^m$, the unique real solution to the equation $||\mathbf{a}||^2 + ||\mathbf{b}||^2 = 2\,\mathbf{a}\cdot\mathbf{b}$ is $\mathbf{a} = \mathbf{b}$.
\end{lemma}

\begin{proof}
By the definition of the dot product, $||\mathbf{a}||^2 + ||\mathbf{b}||^2 = 2||\mathbf{a}||\,||\mathbf{b}||\cos{\theta_{ab}}$, where $\theta_{ab}$ is the angle between the vectors $\mathbf{a}$ and $\mathbf{b}$.
Rearranging and applying the quadratic formula for $||\mathbf{a}||$ yields
\begin{equation} \label{eq:quadratic}
    ||\mathbf{a}|| = \frac{1}{2}\Big(2||\mathbf{b}||\cos{\theta_{ab}} \pm \sqrt{4||\mathbf{b}||^2\cos^2{\theta_{ab}} - 4||\mathbf{b}||^2}\Big).
\end{equation}
Since $\mathbf{a}\in\mathbb{R}^m$, $||\mathbf{a}||\in\mathbb{R}$, hence
\begin{equation}
    4||\mathbf{b}||^2(\cos^2{\theta_{ab}} - 1) \geq 0,
\end{equation}
which implies $\cos{\theta_{ab}} = \pm 1$.

If $\cos{\theta_{ab}} = 1$, then $\theta_{ab} = 0$ and $||\mathbf{a}|| = ||\mathbf{b}||$ by \eqref{eq:quadratic}, which implies $\mathbf{a} = \mathbf{b}$.

If $\cos{\theta_{ab}} = -1$, then $||\mathbf{a}|| = - ||\mathbf{b}||$ by \eqref{eq:quadratic}, which implies $\mathbf{a} = \mathbf{b} =0$.

\end{proof}

To compute the energy-optimal control input for the system we follow the standard procedure for constrained optimal control \cite{Bryson1975AppliedControl}. First we take $q-1$ time derivatives of \eqref{eq:constraint} to construct a vector of tangency conditions,
\begin{equation} \label{eq:tangency}
    \mathbf{N}(\mathbf{x}(t), t) = 
    \begin{bmatrix}
    g(\mathbf{x}(t), t) \\
    g^{(1)}(\mathbf{x}(t), t) \\
    ... \\
    g^{(q-1)}(\mathbf{x}(t), t)
    \end{bmatrix},
\end{equation}
where $g(\mathbf{x}(t), t)$ is a class $C^{q-1}$ function.
Next, we seek the optimal control input $\mathbf{u}(t)$ that minimizes the Hamiltonian,
\begin{equation} \label{eq:constrainedHamiltonian}
    H = \frac{1}{2}||\mathbf{u}(t)||^2 + \boldsymbol{\lambda}(t)\cdot\mathbf{f}(\mathbf{x}(t), \mathbf{u}(t)) + \mu(t)\,g^{(q)}(\mathbf{x}(t), t),
\end{equation}
where $\boldsymbol{\lambda}(t) \in\mathbb{R}^{k\times n}$ are the influence functions, $\mathbf{f}(\mathbf{x}(t), \mathbf{u}(t))$ are the integrator dynamics defined by \eqref{eq:k-dynamics},  and $\mu(t)\in\mathbb{R}_{\geq0}$ is a Lagrange multiplier where
\begin{equation} \label{eq:mu}
    \mu(t) \begin{cases}
    = 0, &\text{ if } g(\mathbf{x}(t), t) < 0, \\
    \geq 0, & \text{ if } g(\mathbf{x}(t), t) = 0.
    \end{cases}
\end{equation}

The optimal control input must satisfy $\frac{\partial H}{\partial \mathbf{u}} = 0$, thus the optimal unconstrained input is
\begin{equation} \label{eq:uUnconstrained}
    \mathbf{u}(t) = -\boldsymbol{\lambda}_k(t),
\end{equation}
where $\boldsymbol{\lambda}_k(t)$ is the influence function corresponding to the state $\mathbf{x}_k(t)$ and $k = |\mathcal{S}(t)|$.

\begin{theorem} \label{thm:continuity}
    Consider the dynamical system $\mathcal{S}(t)$ in \eqref{eq:states} with an energy cost  \eqref{eq:energy}, and a scalar functional constraint on the state trajectory $g(\mathbf{x}(t), t)$.
    Suppose $\mathcal{S}(t)$ transitions between the constrained and unconstrained cases at time $t_1$, and let $\mathbf{N}(\mathbf{x}(t), t)$ denote the tangency conditions \eqref{eq:tangency}.
    If there exists $t_2 > t_1$, $t_1, t_2\in\mathbb{R},$ such that $g(\mathbf{x}(t), t) = 0$ for all $t\in[t_1, t_2]$, and $\mathbf{N}_t(\mathbf{x}(t_1), t_1) = \mathbf{0}$, 
    then the optimal control input $\mathbf{u}(t)$ is continuous at the junction  $t_1$.
\end{theorem}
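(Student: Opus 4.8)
The plan is to reduce the claim to an application of Lemma~\ref{lma:uniqueness}. Since each $\mathbf{x}_p(t)$ is the integral of a bounded signal, the state $\mathbf{x}(t)$ is continuous at $t_1$, so it suffices to show that the one-sided control limits $\mathbf{u}(t_1^-)$ and $\mathbf{u}(t_1^+)$ agree. I would invoke the junction (corner) conditions for state-constrained optimal control from \cite{Bryson1975AppliedControl}: there is a constant multiplier vector $\boldsymbol{\pi}=(\pi_0,\dots,\pi_{q-1})$ adjoined to the tangency conditions $\mathbf{N}$ of \eqref{eq:tangency} such that, across the junction $t_1$, the influence functions satisfy the jump relation $\boldsymbol{\lambda}_p(t_1^-)-\boldsymbol{\lambda}_p(t_1^+)=\sum_{j=0}^{q-1}\pi_j\,\frac{\partial g^{(j)}}{\partial\mathbf{x}_p}$ for every $p$, while the Hamiltonian jump is proportional to $\mathbf{N}_t(\mathbf{x}(t_1),t_1)$.

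The first key step is to use the hypothesis $\mathbf{N}_t(\mathbf{x}(t_1),t_1)=\mathbf{0}$ to eliminate the Hamiltonian jump, so that $H(t_1^-)=H(t_1^+)$. I would then expand \eqref{eq:constrainedHamiltonian} on each side using the integrator dynamics \eqref{eq:k-dynamics}. On the unconstrained side $\mu=0$ and $\boldsymbol{\lambda}_k(t_1^-)=-\mathbf{u}(t_1^-)$ by \eqref{eq:uUnconstrained}; on the constrained side the active constraint $g(\mathbf{x}(t),t)=0$ on all of $[t_1,t_2]$ forces $g^{(1)}=\dots=g^{(q)}=0$ there, so the term $\mu\,g^{(q)}$ vanishes at $t_1^+$. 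Substituting these facts together with the influence-function jumps, the scalar identity $H(t_1^-)=H(t_1^+)$ rearranges to
\begin{equation*}
||\mathbf{u}(t_1^-)||^2+||\mathbf{u}(t_1^+)||^2 = 2\,\mathbf{u}(t_1^-)\cdot\mathbf{u}(t_1^+)+2\sum_{j=0}^{q-1}\pi_j\,S_j,
\end{equation*}
where $S_j := \sum_{p=1}^{k-1}\frac{\partial g^{(j)}}{\partial\mathbf{x}_p}\cdot\mathbf{x}_{p+1}+\frac{\partial g^{(j)}}{\partial\mathbf{x}_k}\cdot\mathbf{u}(t_1^+)$ gathers all the jump contributions.

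The crux is to show that each $S_j$ vanishes. Differentiating $g^{(j)}$ once under the integrator dynamics gives the chain-rule identity $g^{(j+1)} = \frac{\partial g^{(j)}}{\partial t}+\sum_{p=1}^{k-1}\frac{\partial g^{(j)}}{\partial\mathbf{x}_p}\cdot\mathbf{x}_{p+1}+\frac{\partial g^{(j)}}{\partial\mathbf{x}_k}\cdot\mathbf{u}$, so that $S_j = g^{(j+1)}-\frac{\partial g^{(j)}}{\partial t}$ evaluated at $t_1^+$. Here $g^{(j+1)}=0$ because the constraint is active on the entire interval, and $\frac{\partial g^{(j)}}{\partial t}=0$ because it is exactly the $j$-th entry of $\mathbf{N}_t(\mathbf{x}(t_1),t_1)=\mathbf{0}$; hence $S_j=0$ for every $j$. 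The equation above then collapses to $||\mathbf{u}(t_1^-)||^2+||\mathbf{u}(t_1^+)||^2 = 2\,\mathbf{u}(t_1^-)\cdot\mathbf{u}(t_1^+)$, and Lemma~\ref{lma:uniqueness} yields $\mathbf{u}(t_1^-)=\mathbf{u}(t_1^+)$, i.e., continuity of the control at $t_1$.

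I expect the vanishing of the jump terms $S_j$ to be the main obstacle, and it is also where both hypotheses are consumed simultaneously: the interval condition supplies $g^{(j+1)}=0$ while the tangency condition supplies $\frac{\partial g^{(j)}}{\partial t}=0$. If the constraint were active only at the isolated instant $t_1$ rather than on $[t_1,t_2]$, then $g^{(j+1)}$ need not vanish and the influence-function jumps would survive, so the interval hypothesis is essential and not merely technical. A secondary point to verify carefully is the precise form and sign of the corner conditions borrowed from \cite{Bryson1975AppliedControl}, and that $\mu\,g^{(q)}\to 0$ as $t\to t_1^+$ despite $\mu$ possibly being nonzero along the boundary arc.
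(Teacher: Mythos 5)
Your proposal is correct and follows essentially the same route as the paper: the same covector and Hamiltonian jump conditions, the same reduction to the identity $\|\mathbf{u}^-\|^2+\|\mathbf{u}^+\|^2=2\,\mathbf{u}^-\cdot\mathbf{u}^+$ via Lemma~\ref{lma:uniqueness}, and the same use of the active-constraint interval to kill the residual jump terms. The only cosmetic difference is that you apply the hypothesis $\mathbf{N}_t=\mathbf{0}$ separately to the Hamiltonian jump and to each $S_j$, whereas the paper keeps both contributions together and identifies their sum as the (vanishing) right total derivative $\frac{d}{dt}\mathbf{N}$ at $t_1^+$.
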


\begin{proof}
The jump conditions of the influence functions and Hamiltonian at time $t$ are
\begin{align}
    \boldsymbol{\lambda}^T(t^+) = \boldsymbol{\lambda}^T(t^-) - \boldsymbol{\pi}^T \frac{\partial \mathbf{N}}{\partial \mathbf{x}}\Big|_{t}, \label{eq:covectorJump}\\
    H(t^+) - H(t^-) = \boldsymbol{\pi}^T \frac{\partial \mathbf{N}}{\partial t}\Big|_{t}, \label{eq:hamiltonianJump}
\end{align}
where $t^-$ and $t^+$ correspond to the left and right limits of $t$, respectively, and $\boldsymbol{\pi}$ is a $q\times1$ vector of constant Lagrange multipliers.
Substituting \eqref{eq:constrainedHamiltonian} into \eqref{eq:hamiltonianJump} yields
\begin{equation} \label{eq:hamiltonianJumpExpanded}
    \frac{1}{2}||\mathbf{u}^+||^2 + \boldsymbol{\lambda}^+\cdot\mathbf{f}^+ - \frac{1}{2}||\mathbf{u}^-||^2 - \boldsymbol{\lambda}^-\cdot\mathbf{f}^- = \boldsymbol{\pi}^T\mathbf{N}_t,
\end{equation}
where the superscripts $^-$ and $^+$ correspond to variables evaluated at $t^-$ and $t^+$, respectively. Note that $\mu(t^-) = 0$ and $g^{(q)}(\mathbf{x}(t^+), t^+) = 0$, thus those terms do not appear in \eqref{eq:hamiltonianJumpExpanded}. Substituting \eqref{eq:covectorJump} into \eqref{eq:hamiltonianJumpExpanded} yields
\begin{align} \label{eq:hamiltonianJumpExpandedCovector}
    \frac{1}{2}||\mathbf{u}^+||^2 &+ \Big(\boldsymbol{\lambda}^- - (\boldsymbol{\pi}^T \mathbf{N}_x)^T\Big)\cdot\mathbf{f}^+ - \frac{1}{2}||\mathbf{u}^-||^2 - \boldsymbol{\lambda}^-\cdot\mathbf{f}^- \notag\\ &= \boldsymbol{\pi}^T\mathbf{N}_t.
\end{align}

Next, we simplify the influence functions in \eqref{eq:hamiltonianJumpExpandedCovector} using continuity of the states and \eqref{eq:uUnconstrained},
\begin{align}
    \boldsymbol{\lambda}^-\cdot\mathbf{f}^+ - \boldsymbol{\lambda^-}\cdot\mathbf{f}^- \notag
    &=(\boldsymbol{\lambda}_1^-\cdot\mathbf{x}_2 + ... + \boldsymbol{\lambda}_k^-\cdot\mathbf{u}^+) \notag\\
    &\quad- (\boldsymbol{\lambda}_1^-\cdot\mathbf{x}_2 + ... + \boldsymbol{\lambda}_k^-\cdot\mathbf{u}^-) \notag\\
    &=\boldsymbol{\lambda}_k^-\cdot\mathbf{u}^+ - \boldsymbol{\lambda}_k^-\cdot\mathbf{u}^- \notag\\
    &= -\mathbf{u}^-\cdot\mathbf{u}^+ + ||\mathbf{u}^-||^2. \label{eq:covectorSimplified}
\end{align}
Substituting \eqref{eq:covectorSimplified} into \eqref{eq:hamiltonianJumpExpandedCovector} and rearranging yields
\begin{equation} \label{eq:hamiltonianZeroForm}
    \frac{1}{2}||\mathbf{u}^+||^2 +  \frac{1}{2}||\mathbf{u}^-||^2 - \mathbf{u}^+\cdot\mathbf{u}^-  = \boldsymbol{\pi}^T\mathbf{N}_t + (\boldsymbol{\pi}^T\mathbf{N}_x)\mathbf{f}^+.
\end{equation}
By Lemma \ref{lma:uniqueness}, the control input $\mathbf{u}(t)$ is continuous at the junction if and only if the right hand side of \eqref{eq:hamiltonianZeroForm} is zero, hence we may formulate the equivalent condition,
\begin{equation} \label{eq:RHS}
    \boldsymbol{\pi}^T\Big( \mathbf{N}_t + \mathbf{N}_x\mathbf{f}^+  \Big) = 0.
\end{equation}
Since $\boldsymbol{\pi}$ is generally nonzero, we seek to satisfy
\begin{equation} \label{eq:condition}
    \mathbf{N}_t + \mathbf{N}_x \mathbf{f}^+ = \mathbf{0}.
\end{equation}

Since $g(\mathbf{x}(t), t) = 0$ for all $t\in[t_1, t_2]$, its $q$ derivatives exist and are equal to zero for all $t\in[t_1^+, t_2^-]$ \cite{Bryson1963OptimalSolutions}.
Thus, $\mathbf{N}(\mathbf{x}(t), t) = 0$ for all $t\in[t_1^+, t_2^-]$.
Equating the time derivative of \eqref{eq:tangency} at $t_1^+$ to \eqref{eq:condition} yields the equivalent condition
\begin{equation} \label{eq:condition2}
     \frac{d}{dt}\mathbf{N}(\mathbf{x}(t_1^+), t_1^+) = \mathbf{N}_t + \mathbf{N}_x \mathbf{f}^+ = \mathbf{0}.
\end{equation}
Eq. \eqref{eq:condition2} can be expanded into a system of equations for the right-hand derivative of each row $r$, namely
\begin{equation} \label{eq:finalCondition}
    \frac{d}{dt} g^{(r)^+} = \frac{\partial}{\partial t}g^{(r)^+} + \frac{\partial g^{(r)^+}}{\partial \mathbf{x}} \frac{d \mathbf{x}^+}{dt},
\end{equation}
where $r = 0, 1, 2, ..., q-1$.
Condition \eqref{eq:finalCondition} is the definition of right $q$-derivatives of $g(\mathbf{x}, t)$, which exist by our premise.
Thus, \eqref{eq:condition} is always satisfied and the control input $\mathbf{u}(t)$ is continuous.

The analysis for the system when exiting the constrained arc to the unconstrained arc is the same, and thus due to space limitations, the proof is omitted.
\end{proof}


\begin{corollary} \label{cor:continuous}
Consider the dynamical system described in Theorem \ref{thm:continuity}.
If the system is traveling along the trajectory imposed by the constraint $g(\mathbf{x}(t), t) = 0$, and the tangency conditions, $\mathbf{N}(\mathbf{x}(t), t)$, are discontinuous at some time $t_1$, then the control input at $\mathbf{u}(t_1)$ is continuous if and only if a feasible $\mathbf{u}(t_1)$ exists.
\end{corollary}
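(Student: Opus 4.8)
The plan is to reuse the machinery built in the proof of Theorem~\ref{thm:continuity} and to reinterpret its central condition \eqref{eq:condition} as a feasibility statement. The jump conditions \eqref{eq:covectorJump}--\eqref{eq:hamiltonianJump} are corner conditions that hold at \emph{any} junction of the optimal trajectory, including the discontinuity of $\mathbf{N}$ at $t_1$. Evaluating them with the left- and right-limits reproduces \eqref{eq:hamiltonianZeroForm} verbatim, because nowhere in that derivation was continuity of $\mathbf{N}$ used---only that $\mu(t_1^-)=0$ and $g^{(q)}(\mathbf{x}(t_1^+),t_1^+)=0$, both of which still hold since the system remains on the arc $g=0$. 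I would therefore begin by asserting that
\begin{equation*}
  \frac{1}{2}||\mathbf{u}^+||^2 + \frac{1}{2}||\mathbf{u}^-||^2 - \mathbf{u}^+\cdot\mathbf{u}^- = \boldsymbol{\pi}^T\big(\mathbf{N}_t + \mathbf{N}_x\mathbf{f}^+\big)
\end{equation*}
still holds, and invoke Lemma~\ref{lma:uniqueness} to conclude that $\mathbf{u}^+=\mathbf{u}^-$ (continuity at $t_1$) holds if and only if the right-hand side vanishes.

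The substantive departure from Theorem~\ref{thm:continuity} is that, because $\mathbf{N}$ is now discontinuous at $t_1$, I can no longer argue that $\mathbf{N}\equiv\mathbf{0}$ on a neighborhood forces $\tfrac{d}{dt}\mathbf{N}^+=\mathbf{0}$ for free; condition \eqref{eq:condition} becomes a genuine restriction. I would treat $\mathbf{N}_t+\mathbf{N}_x\mathbf{f}^+=\mathbf{0}$ as the system \eqref{eq:finalCondition} of right-derivatives of the tangency rows, written in the right-state-derivative $\tfrac{d\mathbf{x}^+}{dt}$. Continuity of the states pins down every component of $\tfrac{d\mathbf{x}^+}{dt}$ except the last, $\dot{\mathbf{x}}_k^+=\mathbf{u}^+$, so \eqref{eq:finalCondition} collapses to a set of conditions whose only undetermined quantity is the control $\mathbf{u}^+$. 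By definition a control $\mathbf{u}(t_1)$ is feasible for the arc precisely when it keeps the trajectory on $g=0$ going forward, which is exactly the requirement that the right-derivatives of every tangency row vanish---that is, that \eqref{eq:finalCondition} admit a solution in $\mathbf{u}^+$.

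With this identification the equivalence follows in both directions. For the forward direction, if $\mathbf{u}(t_1)$ is continuous then $\mathbf{u}^+=\mathbf{u}(t_1)$ is itself a control maintaining the constraint, so a feasible control trivially exists. For the converse, if a feasible $\mathbf{u}(t_1)$ exists it solves \eqref{eq:finalCondition}, hence $\mathbf{N}_t+\mathbf{N}_x\mathbf{f}^+=\mathbf{0}$, the right-hand side of the displayed identity is zero, and Lemma~\ref{lma:uniqueness} forces $\mathbf{u}^+=\mathbf{u}^-$. I would close by noting that the optimal constrained input is the energy-minimizer among feasible controls, so it inherits a solution of \eqref{eq:finalCondition} whenever one exists.

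I expect the main obstacle to be the passage from the scalar identity $\boldsymbol{\pi}^T(\mathbf{N}_t+\mathbf{N}_x\mathbf{f}^+)=0$ to the vector condition \eqref{eq:condition}: the proof of Theorem~\ref{thm:continuity} dismisses this with ``$\boldsymbol{\pi}$ is generally nonzero,'' but here I must argue that feasibility corresponds to the \emph{full} vector $\mathbf{N}_t+\mathbf{N}_x\mathbf{f}^+$ vanishing rather than merely its projection onto $\boldsymbol{\pi}$. I would address this by showing that staying on $g=0$ forward in time requires every row of \eqref{eq:finalCondition}---not a single linear combination---to hold, so that a constraint-respecting control can satisfy the projection condition only when the whole vector vanishes, making feasibility genuinely equivalent to continuity.
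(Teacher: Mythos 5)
There is a genuine gap: your argument rests on the premise that the system ``remains on the arc $g=0$'' across $t_1$, but the whole point of the corollary---and of the paper's own proof---is that it cannot. Since $\mathbf{N}$ in \eqref{eq:tangency} collects $g$ and its first $q-1$ time derivatives, $g(\mathbf{x}(t),t)\equiv 0$ on an interval forces $\mathbf{N}\equiv\mathbf{0}$ there; so if $\mathbf{N}(\mathbf{x}(t_1^-),t_1^-)=\mathbf{0}$ while continuity of the state makes at least one row of $\mathbf{N}(\mathbf{x}(t_1^+),t_1^+)$ nonzero, the trajectory cannot satisfy $g=0$ on any interval $[t_1,t_1+\epsilon)$ without an instantaneous jump in the state, i.e., an impulsive control, which is excluded by the boundedness of $\mathbf{u}$. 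The paper's proof is exactly this observation: any feasible (bounded) control must take the system onto an unconstrained arc at $t_1$, and the exit transition is then continuous by Theorem~\ref{thm:continuity}. Your proof never makes this exit step, and the quantities you carry over from Theorem~\ref{thm:continuity} are the wrong ones for your setup: $\mu(t_1^-)=0$ held there because the system was \emph{unconstrained} before the junction, whereas here it is constrained before $t_1$; and $g^{(q)}(\mathbf{x}(t_1^+),t_1^+)=0$ is precisely the assumption that fails.

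A second difficulty is your identification of ``a feasible $\mathbf{u}(t_1)$ exists'' with ``\eqref{eq:finalCondition} admits a solution in $\mathbf{u}^+$.'' Only the last row of \eqref{eq:finalCondition} contains the control, since $q$ is by definition the lowest derivative order at which $\mathbf{u}$ appears; the remaining rows are fixed by the continuous state and cannot be influenced by any choice of $\mathbf{u}^+$. Moreover, even if every row of $\mathbf{N}_t+\mathbf{N}_x\mathbf{f}^+$ happened to vanish, the nonzero value of $\mathbf{N}(\mathbf{x}(t_1^+),t_1^+)$ itself already prevents the trajectory from remaining on $g=0$, so solvability of \eqref{eq:finalCondition} in $\mathbf{u}^+$ is neither necessary nor sufficient for feasibility in the sense the corollary uses (a bounded control keeping $g\le 0$). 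The repair is structural rather than local: first show the constraint must deactivate at $t_1$, then apply the exit case of Theorem~\ref{thm:continuity} to obtain continuity of $\mathbf{u}(t_1)$.
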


\begin{proof}
Let $t_1$ be the time where any element of $\mathbf{N}(\mathbf{x}(t_1), t_1)$ is discontinuous, while $\mathbf{N}(\mathbf{x}(t_1^-), t_1^-) = \mathbf{0}$.
Continuity in the system state implies that at least one row of $\mathbf{N}(\mathbf{x}(t_i^+), t_1^+)$ must be nonzero.
To satisfy $g(\mathbf{x}(t), t) = 0$ for $t>t_1$ requires an infinite impulse control input at $t_1$ \cite{Bryson1975AppliedControl}, which contradicts the boundedness of $\mathbf{u}(t)$.
Thus, if a feasible control input exists, the system must transition to an unconstrained arc at $t_1$, hence $\mathbf{u}(t_1)$ is continuous by Theorem \ref{thm:continuity}.
\end{proof}

Next, we present a case study for Theorem \ref{thm:continuity} and Corollary \ref{cor:continuous} for a double-integrator system in $\mathbb{R}^2$ under a barycentric motion constraint. This is presented in the context of constraint-driven optimal control, where the agent seeks to minimize energy consumption under a path constraint.

\section{Problem Formulation} \label{sec:barycentric}

As a step toward real-time constraint-driven optimal control of swarm systems, consider a single agent in $\mathbb{R}^2$ with double integrator dynamics
\begin{align}
    \dot{\mathbf{p}}(t) &= \mathbf{v}(t), \label{eq:pDynamics} \\
    \dot{\mathbf{v}}(t) &= \mathbf{u}(t), \label{eq:vDynamics}
\end{align}
where $\mathbf{p}$ and $\mathbf{v}$ are the agent's position and velocity, respectively. The state of the agent is given by
\begin{equation}
    \mathbf{x}(t) = \begin{bmatrix}
    \mathbf{p}(t),\\
    \mathbf{v}(t)
    \end{bmatrix}.
\end{equation}

Let $\mathbf{p}_r(t)$ be a time-varying reference position which the agent seeks to reach in finite time. We denote the relative distance between the agent and reference state as
\begin{equation}
	\mathbf{r}(t) = \mathbf{p}(t) - \mathbf{p}_r(t).
\end{equation}

\begin{definition} \label{def:barycentric}
    For a desired aggregation distance $D\in\mathbb{R}_{>0}$, we define the barycentric motion constraint
    \begin{equation} \label{eq:barycentricConstraint}
        g(\mathbf{x}(t), t) = \begin{cases}
        \beta(\mathbf{x}(t)) + \mathbf{r}(t)\cdot\dot{\mathbf{r}}(t), & ||\mathbf{r}(t)|| > D \\
        \mathbf{r}(t)\cdot\mathbf{r}(t) - D^2, & ||\mathbf{r}(t)|| \leq D
        \end{cases}
        \quad \leq 0,
    \end{equation}
    where $\beta(\cdot) > 0$ imposes decreasing barycentric motion toward the closed disk.
    We refer to $||\mathbf{r}(t)|| > D$ and $||\mathbf{r}(t)|| \leq D$ as Case I and Case II of $g(\mathbf{x}(t), t)$, respectively.
\end{definition}


Finally, we present a constraint-driven optimal control problem to determine the energy-optimal trajectory of the agent under the barycentric motion constraint.

\begin{problem} \label{prb:optimalControl}
The problem is formulated as follows:
\begin{align}
    \min_{\mathbf{u}(t)}~ & \frac{1}{2} \int_{t^0}^{t^f} ||\mathbf{u}(t)||^2 ~ dt\\
    \text{subject to:}~ & \eqref{eq:pDynamics}, \eqref{eq:vDynamics}, \eqref{eq:barycentricConstraint}, \\
    \text{given:} ~ &\mathbf{p}(t^0), \mathbf{v}(t^0).
\end{align}
where $[t^0, t^f]\subset\mathbb{R}$ is the planning horizon for the agent.
\end{problem}

To solve Problem \ref{prb:optimalControl} we impose the following assumptions.
\begin{assumption} \label{smp:feasible}
The initial state of the agent, $\mathbf{x}(t^0)$, satisfies \eqref{eq:barycentricConstraint}.
\end{assumption}

We impose Assumption \ref{smp:feasible} to ensure that the agent can generate a feasible trajectory at time $t^0$. This assumption may be relaxed by extending \eqref{eq:barycentricConstraint} to include an additional case.
However, this would add complexity to the problem without fundamentally changing our analysis.

\begin{assumption} \label{smp:perfect}
There are no disturbances or noise, and the agent is able to track the trajectory generated by Problem \ref{prb:optimalControl}.
\end{assumption}

We impose Assumption \ref{smp:perfect} to analyze the agent's behavior in a deterministic setting.
This assumption may be relaxed by imposing some notion of robustness to Problem \ref{prb:optimalControl} or reformulating it as a stochastic optimal control problem.

\begin{assumption}\label{smp:uddot}
The reference trajectory satisfies $\frac{d^4\mathbf{p}_r}{dt^4} = 0$.
\end{assumption}

Assumption \ref{smp:uddot} simplifies the analysis in Section \ref{sec:solution}. This assumption may be relaxed by carrying the term $\frac{d^4\mathbf{p}_r}{dt^4}$ through the final steps, which adds complexity without a significant impact on our results.

\section{Solution Approach} \label{sec:solution}

As a first step, we prove the continuity of control when transitioning between the cases of the barycentric motion constraint (Definition \ref{def:barycentric}). Then, we discuss the constrained motion primitive for Problem \ref{prb:optimalControl} and show how to optimally piece together the unconstrained and constrained arcs using only state and control conditions.

\subsection{Properties of the Barycentric Motion Constraint}
The constraint $g(\mathbf{x}(t),t)$ has two cases  (see Definition \ref{def:barycentric}), Case I corresponds to a \emph{barycentric spiral}, and Case II corresponds to a closed disk centered on the reference state.
When an agent transitions between an unconstrained arc and the arc defined by $g(\mathbf{x}, t) = 0$, the control input $\mathbf{u}(t)$ is continuous by Theorem \ref{thm:continuity}.
Next, we present three lemmas that describe the behavior of the agent while traveling along $g(\mathbf{x}(t), t) = 0$.

\begin{lemma} \label{lma:finiteTime}
    If there exist $\gamma\in\mathbb{R}_{>0}$ that lower bounds $\beta(\mathbf{x}(t))$ for all $t\in\mathbb{R}$, then the agent will satisfy $||\mathbf{r}(t)|| \leq D$ in finite time.
\end{lemma}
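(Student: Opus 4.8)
The plan is to analyze the dynamics along the constrained arc in Case I, where $g(\mathbf{x}(t),t) = \beta(\mathbf{x}(t)) + \mathbf{r}(t)\cdot\dot{\mathbf{r}}(t) = 0$, and show that this forces $||\mathbf{r}(t)||$ to decrease at a rate that guarantees reaching $D$ in finite time. First I would observe that while the agent travels along $g=0$, we have the identity $\mathbf{r}(t)\cdot\dot{\mathbf{r}}(t) = -\beta(\mathbf{x}(t))$. The key is to relate this quantity to the time derivative of the squared distance $||\mathbf{r}(t)||^2$, since
\begin{equation}
    \frac{d}{dt}\left(\frac{1}{2}||\mathbf{r}(t)||^2\right) = \mathbf{r}(t)\cdot\dot{\mathbf{r}}(t) = -\beta(\mathbf{x}(t)).
\end{equation}

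Next I would invoke the hypothesis that there exists $\gamma\in\mathbb{R}_{>0}$ with $\beta(\mathbf{x}(t)) \geq \gamma$ for all $t$. Substituting this lower bound gives
\begin{equation}
    \frac{d}{dt}\left(\frac{1}{2}||\mathbf{r}(t)||^2\right) \leq -\gamma < 0,
\end{equation}
so the squared distance is strictly decreasing with a uniform negative upper bound on its rate. Integrating from the time $t^\star$ at which the agent enters the Case~I constrained arc, I would obtain $\frac{1}{2}||\mathbf{r}(t)||^2 \leq \frac{1}{2}||\mathbf{r}(t^\star)||^2 - \gamma(t - t^\star)$. The plan is then to solve for the time at which the right-hand side reaches $\frac{1}{2}D^2$, yielding an explicit finite bound $t - t^\star \leq \big(||\mathbf{r}(t^\star)||^2 - D^2\big)/(2\gamma)$ after which $||\mathbf{r}(t)|| \leq D$ and the agent has transitioned into Case~II.

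I would need to handle the bookkeeping of whether the agent is actually traveling along the constrained arc $g=0$ versus the unconstrained interior $g<0$. If the agent is on an unconstrained arc with $g(\mathbf{x}(t),t) < 0$, then $\mathbf{r}\cdot\dot{\mathbf{r}} < -\beta \leq -\gamma$, so the strict decrease bound holds there as well; thus the inequality $\frac{d}{dt}(\frac{1}{2}||\mathbf{r}||^2) \leq -\gamma$ is valid throughout Case~I regardless of whether the constraint is active. This unifies both arcs and avoids a delicate case split. By Assumption~\ref{smp:feasible}, the initial state is feasible, so the agent starts in Case~I (or already satisfies $||\mathbf{r}||\leq D$), and the monotone decrease drives it into the disk.

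The main obstacle I anticipate is rigorously justifying that the uniform bound $\beta \geq \gamma$ transfers cleanly into the finite-time estimate across the transition between unconstrained and constrained arcs, and ensuring the decrease argument does not break at the junction between Case~I and Case~II. Since the control input is continuous at such junctions by Theorem~\ref{thm:continuity}, the trajectory $\mathbf{r}(t)$ and hence $||\mathbf{r}(t)||^2$ remain continuous and the integral estimate remains valid across the boundary. The remainder is the routine integration sketched above, so the crux is establishing the uniform decrease rate of $||\mathbf{r}(t)||^2$ from the structure of the Case~I constraint.
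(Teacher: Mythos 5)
Your proof is correct, and it rests on the same core observation as the paper's: along Case I the constraint forces $\mathbf{r}\cdot\dot{\mathbf{r}} \leq -\beta \leq -\gamma$. Where you diverge is in how that inequality is converted into finite-time convergence. You treat $\tfrac{1}{2}\|\mathbf{r}\|^2$ as a Lyapunov-like quantity, note $\tfrac{d}{dt}\tfrac{1}{2}\|\mathbf{r}\|^2 = \mathbf{r}\cdot\dot{\mathbf{r}} \leq -\gamma$, and integrate to get the explicit bound $t - t^\star \leq (\|\mathbf{r}(t^\star)\|^2 - D^2)/(2\gamma)$. The paper instead divides through by $\|\mathbf{r}\|$ to bound the radial speed $\tfrac{d}{dt}\|\mathbf{r}\| = \|\dot{\mathbf{r}}\|\cos\theta_r$ and claims it is below $-\gamma/D$ by ``substituting the lower bound for $\mathbf{r}$''; that substitution actually runs the inequality the wrong way, since $\|\mathbf{r}\| > D$ gives $\|\dot{\mathbf{r}}\|\cos\theta_r \leq -\gamma/\|\mathbf{r}\|$, and $-\gamma/\|\mathbf{r}\| > -\gamma/D$ when $\|\mathbf{r}\|$ is large. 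The paper's argument can be repaired (e.g., by using the monotone decrease of $\|\mathbf{r}\|$ to replace $D$ with the initial distance $\|\mathbf{r}(t^\star)\|$), but your squared-distance route sidesteps the issue entirely, yields a quantitative arrival time, and, as you note, applies uniformly on both the active arc ($g=0$) and the interior ($g<0$), so no case split at junctions is needed. Your version is the cleaner and more rigorous of the two.
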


\begin{proof}
    Let the agent satisfy $||\mathbf{r}(t)|| > D$.
    By Definition \ref{def:barycentric}, $\beta\big(\mathbf{x}(t)\big) + \mathbf{r}(t)\cdot\dot{\mathbf{r}}(t) \leq 0$.
    This implies that $\beta\big(\mathbf{x}(t)\big) + ||\mathbf{r}(t)||\, ||\dot{\mathbf{r}}(t)||\cos{\theta_r(t)} \leq 0$ by the definition of the dot product, where $\theta_r(t)$ is the angle between $\mathbf{r}(t)$ and $\dot{\mathbf{r}}(t)$.
    Substituting the lower bounds for $\beta\big(\mathbf{x}(t)\big)$ and $\mathbf{r}(t)$ and rearranging yields $||\dot{\mathbf{r}}(t)|| \cos{\theta_r(t)} < - \frac{\gamma}{D}$.
    Thus, the component of $\dot{\mathbf{r}}(t)$ in the direction of $\mathbf{r}(t)$ has a negative sign and is upper bounded by a negative constant.
    This implies that $||\mathbf{r}(t)||$ will decrease to a distance $D$ in finite time.
\end{proof}

\begin{lemma} \label{lma:disk}
If the agent enters the closed disk of diameter $D$, as described by Case II of Definition \ref{def:barycentric}, then the agent will remain within the disk for all time.
\end{lemma}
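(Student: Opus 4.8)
The plan is to establish this as a forward-invariance result through a contradiction argument built on the sign of the radial velocity at the boundary of the disk. The natural quantity to track is $h(t) = ||\mathbf{r}(t)||^2 = \mathbf{r}(t)\cdot\mathbf{r}(t)$, whose time derivative is $\dot{h}(t) = 2\,\mathbf{r}(t)\cdot\dot{\mathbf{r}}(t)$. Since the control input is bounded and the reference trajectory is smooth by Assumption \ref{smp:uddot}, both $\mathbf{r}(t)$ and $\dot{\mathbf{r}}(t)$ are continuous, so $h(t)$ is continuously differentiable and $||\mathbf{r}(t)||$ varies continuously in time; this lets me reason about crossings of the level $||\mathbf{r}|| = D$ rigorously.

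The key observation I would exploit is that whenever the agent lies strictly outside the disk, Case I of Definition \ref{def:barycentric} is the active branch, so feasibility of the trajectory (the hard path constraint $g(\mathbf{x}(t),t)\le 0$ in Problem \ref{prb:optimalControl}) forces $\beta(\mathbf{x}(t)) + \mathbf{r}(t)\cdot\dot{\mathbf{r}}(t) \le 0$. Because $\beta(\cdot) > 0$, this immediately yields $\mathbf{r}(t)\cdot\dot{\mathbf{r}}(t) \le -\beta(\mathbf{x}(t)) < 0$, and hence $\dot{h}(t) < 0$ at every point with $||\mathbf{r}(t)|| > D$. In words, the squared radial distance is \emph{strictly} decreasing throughout the entire exterior of the disk, so the agent cannot be sustained outside it.

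I would then argue by contradiction. Suppose the agent enters at time $t_0$ with $||\mathbf{r}(t_0)|| \le D$ but leaves at some later $t_1 > t_0$ with $||\mathbf{r}(t_1)|| > D$. Using continuity of $||\mathbf{r}(\cdot)||$, define the last instant on the boundary, $\tau = \sup\{t\in[t_0,t_1] : ||\mathbf{r}(t)|| \le D\}$, so that $||\mathbf{r}(\tau)|| = D$ and $||\mathbf{r}(t)|| > D$ for every $t\in(\tau,t_1]$. On this interval Case I applies, giving $\dot{h}(t) < 0$; integrating from $\tau$ yields $||\mathbf{r}(t_1)||^2 = h(\tau) + \int_{\tau}^{t_1}\dot{h}(s)\,ds < D^2$, which contradicts $||\mathbf{r}(t_1)|| > D$. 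Hence no such exit time exists and $||\mathbf{r}(t)|| \le D$ for all $t \ge t_0$.

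I expect the main care to lie in the switching structure at the boundary rather than in any computation: one must confirm that immediately after any attempted crossing the active branch is genuinely Case I, so that the strict inequality $\mathbf{r}\cdot\dot{\mathbf{r}} < 0$ is actually available there, and that the ``last boundary time'' $\tau$ is well defined by continuity of the state. A pleasant feature worth noting is that this invariance requires only $\beta > 0$ and, unlike Lemma \ref{lma:finiteTime}, does not need the uniform lower bound $\gamma$.
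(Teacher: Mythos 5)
Your proof is correct and rests on the same mechanism as the paper's: whenever the agent is strictly outside the disk, Case I of Definition \ref{def:barycentric} together with $\beta > 0$ forces $\mathbf{r}(t)\cdot\dot{\mathbf{r}}(t) < 0$, making an exit infeasible. Your version, which integrates $\dot{h}$ from the last boundary time $\tau$, is in fact slightly more careful than the paper's pointwise argument (which asserts $\mathbf{r}(t_1)\cdot\dot{\mathbf{r}}(t_1) > 0$ at the crossing instant and could in principle miss a tangential crossing with $\mathbf{r}\cdot\dot{\mathbf{r}} = 0$ at $t_1$), but the essential idea is identical.
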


\begin{proof}
Consider the case that $||\mathbf{r}(t)|| = D$ in \eqref{eq:barycentricConstraint}.
To exit the disk at a time $t_1$, the agent must satisfy $\mathbf{r}(t_1)\cdot\dot{\mathbf{r}}(t_1) > 0$.
However, by continuity of $\mathbf{x}(t)$, there exists some $\epsilon > 0$ such that $||\mathbf{r}(t_1+\epsilon)|| > D$ and $\mathbf{r}(t_1+\epsilon)\cdot\dot{\mathbf{r}}(t_1+\epsilon) > 0$.
This is infeasible by Definition \ref{def:barycentric}, thus the agent will remain within the closed disk for all time.
\end{proof}

\begin{lemma} \label{lma:transition}
If the the agent is travelling along the constrained arc described by Definition \ref{def:barycentric}, and transitions from Case I to Case II at a time $t_1$ and distance $||\mathbf{r}(t_1)|| = D$, then the control input is continuous at the transition.
\end{lemma}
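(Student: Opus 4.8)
The plan is to reduce the statement to Theorem~\ref{thm:continuity} by recognizing that the transition from Case~I to Case~II at $t_1$ is in fact a transition from a constrained arc onto an \emph{unconstrained} arc. First I would characterize the agent's radial motion at the junction. On the Case~I constrained arc we have $g(\mathbf{x}(t),t) = \beta(\mathbf{x}(t)) + \mathbf{r}(t)\cdot\dot{\mathbf{r}}(t) = 0$, so that $\mathbf{r}(t)\cdot\dot{\mathbf{r}}(t) = -\beta(\mathbf{x}(t)) < 0$ as $t\to t_1^-$. Since $\frac{d}{dt}||\mathbf{r}||^2 = 2\,\mathbf{r}\cdot\dot{\mathbf{r}} = -2\beta < 0$, the radial distance $||\mathbf{r}(t)||$ is strictly decreasing as it crosses $D$ at $t_1$. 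Hence for $t$ slightly greater than $t_1$ we have $||\mathbf{r}(t)|| < D$, placing the agent strictly inside the closed disk of Case~II, where $g = \mathbf{r}\cdot\mathbf{r} - D^2 < 0$ and the constraint is inactive.

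The key structural observation is therefore that the agent does not join the boundary arc $||\mathbf{r}|| = D$. Such an arc would be a Case~II constrained arc, requiring the tangency condition $\frac{d}{dt}(\mathbf{r}\cdot\mathbf{r} - D^2) = 2\,\mathbf{r}\cdot\dot{\mathbf{r}} = 0$ to hold at $t_1$, which directly contradicts $\mathbf{r}\cdot\dot{\mathbf{r}} = -\beta < 0$. Consequently the junction at $t_1$ is a transition \emph{off} the Case~I constrained arc and onto an unconstrained arc in the disk interior, rather than a junction between two active constraints.

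With this identification, I would invoke Theorem~\ref{thm:continuity} in its exiting-the-constrained-arc form. For Case~I the control $\mathbf{u}(t)$ first appears in $g^{(1)}$, since $\ddot{\mathbf{r}} = \mathbf{u} - \ddot{\mathbf{p}}_r$ and hence $\mathbf{r}\cdot\ddot{\mathbf{r}}$ carries $\mathbf{u}$; thus $q=1$ and the tangency vector \eqref{eq:tangency} is the scalar $\mathbf{N}(\mathbf{x}(t),t) = g(\mathbf{x}(t),t)$. Because the agent travels along $g(\mathbf{x}(t),t) = 0$ on a left-neighborhood of $t_1$, the one-sided derivative $\frac{d}{dt}\mathbf{N} = \mathbf{N}_t + \mathbf{N}_x\mathbf{f}$ exists and vanishes at $t_1^-$, which is exactly condition~\eqref{eq:condition}. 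Theorem~\ref{thm:continuity} then yields continuity of $\mathbf{u}(t)$ at $t_1$.

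I expect the main obstacle to be the bookkeeping caused by the constraint $g(\mathbf{x}(t),t)$ changing functional form, and changing its relative degree $q$ from $1$ in Case~I to $2$ in Case~II, precisely at $||\mathbf{r}|| = D$. The resolution is the radial-velocity computation above: because $\mathbf{r}\cdot\dot{\mathbf{r}} < 0$ forces entry into the interior, only the Case~I tangency structure is active at the junction and the Case~II relative degree never enters the continuity argument, so Theorem~\ref{thm:continuity} applies without modification.
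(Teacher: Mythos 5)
Your proposal is correct and follows essentially the same route as the paper: both arguments hinge on observing that $\mathbf{r}(t_1)\cdot\dot{\mathbf{r}}(t_1) = -\beta < 0$ persists across the junction by state continuity, that remaining on the $\|\mathbf{r}\|=D$ boundary would require $\mathbf{r}\cdot\dot{\mathbf{r}} = 0$, and that the agent therefore exits onto an unconstrained arc, where continuity of $\mathbf{u}$ follows from the exit form of Theorem~\ref{thm:continuity} (the paper routes this last step through Corollary~\ref{cor:continuous}, whose proof itself reduces to Theorem~\ref{thm:continuity}). Your added detail on the relative degree $q=1$ of the Case~I constraint and the vanishing one-sided derivative of $\mathbf{N}$ makes the invocation of the theorem slightly more explicit than the paper's, but the underlying argument is the same.
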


\begin{proof}
When the agent transitions from Case I to Case II at $t_1$, we have $\dot{\mathbf{r}}_i(t_1^-)\cdot\mathbf{r}_i(t_1^-) = -\beta\big(\mathbf{x}(t_1^-)\big) < 0$.
Continuity of $\mathbf{x}_i(t)$ and $\mathbf{x}_r(t)$ implies that $\dot{\mathbf{r}}_i(t_1^+)\cdot\mathbf{r}_i(t_1^+) = -\beta\big(\mathbf{x}(t^+)\big) < 0$.
To stay on the constrained arc requires that $\dot{\mathbf{r}}_i(t_1^+)\cdot\mathbf{r}_i(t_1^+) = 0$. Thus, agent $i$ must exit the constrained arc at $t_1$, and $\mathbf{u}_i(t_1)$ is continuous by Corrolary \ref{cor:continuous}.
\end{proof}

By Lemmas \ref{lma:finiteTime}--\ref{lma:transition} we have proven that our proposed barycentric motion constraint 1) will drive the agent within a disk of diameter $D$ in finite time, 2) the agent will remain within the disk for all future time, and 3) the discontinuity in $g\big(\mathbf{x}(t), t\big)$ does not introduce a discontinuity into the optimal control input.
Next, we describe the constrained motion primitive for Case I of the barycentric motion constraint.

\subsection{Constrained Motion Primitive}

To solve for the constrained motion of the agent when $||\mathbf{r}(t)|| > D$, we use Hamiltonian analysis  \cite{Bryson1975AppliedControl}. First, we construct the vector of tangency conditions,
\begin{equation} \label{eq:tangencyBarycentric}
  \mathbf{N}(\mathbf{x}(t), t) = \Big[ \beta(\mathbf{x}_i) + \mathbf{r}_i(t)\cdot\dot{\mathbf{r}}_i(t) \Big],
\end{equation}
and we append the derivative of \eqref{eq:tangencyBarycentric} to the Hamiltonian,
\begin{equation} \label{eq:constrainedHamiltonianBary}
H = \frac{1}{2}||\mathbf{u}||^2 + \boldsymbol{\lambda^p}\cdot\mathbf{v} + \boldsymbol{\lambda^v}\cdot\mathbf{u} + \mu \Big(
    \dot{\beta} + \mathbf{r}\cdot\ddot{\mathbf{r}} + \dot{\mathbf{r}}\cdot\dot{\mathbf{r}}
    \Big).
\end{equation}
The Euler-Lagrange equations are
\begin{align}
    \mathbf{u}(t) &= -\boldsymbol{\lambda}^v(t) - \mu(t)\Big( \dot{\beta}_u\big(\mathbf{x}(t)\big) + \mathbf{r}(t) \Big), \label{eq:el-u}\\
    -\dot{\boldsymbol{\lambda}}^v(t) &= \boldsymbol{\lambda}^p(t) + \mu(t)\Big( \dot{\beta}_v\big(\mathbf{x}(t)\big) + \dot{\mathbf{r}}(t) \Big), \label{eq:el-lv}\\
    -\dot{\boldsymbol{\lambda}}^p(t) &= \mu(t)\Big( \dot{\beta}_p\big(\mathbf{x}(t)\big) + \ddot{\mathbf{r}}(t) \Big).\label{eq:el-lp}
\end{align}

To solve \eqref{eq:el-u} - \eqref{eq:el-lp} we follow the method outlined in \cite{Beaver2020AnFlocking}. Since Problem \ref{prb:optimalControl} is a generalization of the problem reported in \cite{Beaver2020AnFlocking}, we impose that $||\dot{\mathbf{r}}(t)||$ is a constant. This is the \emph{reigning optimal} solution \cite{Ross2015} for this constrained motion primitive. 

\begin{definition} \label{def:systemBasis}
Consider the basis of $\mathbb{R}^2$ defined by the vectors
\begin{align}
    \hat{p}\big(\mathbf{x}(t)\big) &= \frac{\mathbf{r}(t)}{||\mathbf{r}(t)||} = \frac{\mathbf{r}(t)}{b(t)}, \label{eq:phat} \\
    \hat{q}\big(\mathbf{x}(t)\big) &= \frac{\dot{\mathbf{r}}(t)}{||\dot{\mathbf{r}}(t)||} = \frac{\dot{\mathbf{r}}(t)}{a}, \label{eq:qhat}
\end{align}
where $a = ||\dot{\mathbf{r}}(t)||$ and $b(t) = ||\mathbf{r}(t)||$. This is a well defined basis for $\mathbb{R}^2$ as long as $a \neq 0$, $b(t) \neq 0$, and $\beta \neq ||\mathbf{r}||\,||\dot{\mathbf{r}}||.$
\end{definition}

For simplicity we will omit the dependence of the unit vectors $\hat{p}\big(\mathbf{x}(t)\big)$ and $\hat{q}\big(\mathbf{x}(t)\big)$ on $\mathbf{x}(t)$ when no ambiguity arises.
To guarantee that the basis in Definition \ref{def:systemBasis} is always well defined we select the following functional form for $\beta$,
\begin{equation} \label{eq:beta}
    \beta\big(\mathbf{x}(t)\big) = a\,b(t)\,\kappa,
\end{equation}
where $\kappa\in(0, 1)$ is the cosine of the angle between $\mathbf{r}$ and $\dot{\mathbf{r}}$ by the definition of the dot product.
As $b(t) > D$, we impose $a > 0$ when traveling along the barycentric spiral.

Following the procedure of \cite{Beaver2020AnFlocking}, we may project $\ddot{\mathbf{r}}(t)$ onto the unit vectors $\hat{p}$ and $\hat{q}$, which yields
\begin{equation} \label{eq:rddot}
    \ddot{r}(t) \cdot \begin{bmatrix}
    \hat{p} \\ \hat{q}
    \end{bmatrix}
    = \begin{bmatrix}
    - \frac{a^2 + \dot{\beta}\big(\mathbf{x}(t)\big)}{b(t)} \\
    \dot{a}
    \end{bmatrix},
\end{equation}

We then seek the time derivatives of \eqref{eq:phat} and \eqref{eq:qhat}. First,
\begin{align}
    \dot{\hat{p}} &= \frac{\dot{\mathbf{r}}(t)}{b(t)} - \frac{\dot{b}(t)}{b(t)^2}\mathbf{r}(t) \notag\\&= \frac{a}{b(t)}\hat{q} - \frac{\dot{b}(t)}{b(t)}\hat{p},
\end{align}
where
\begin{align} \label{eq:bdot}
    \dot{b}(t) &= \frac{d}{dt} ||\mathbf{p}_i(t) - \mathbf{p}_j(t)|| \notag\\
    &= \frac{\mathbf{p}_i(t) - \mathbf{p}_j(t)}{||\mathbf{p}_i(t) - \mathbf{p}_j(t)||}\cdot\Big(\mathbf{v}_i(t) - \mathbf{v}_j(t)\Big) \notag\\
    &= \frac{\mathbf{r}(t)\cdot\dot{\mathbf{r}}(t)}{b(t)} = \frac{-\beta\big(\mathbf{x}(t), t\big)}{b(t)} = -a \kappa,
\end{align}
thus,
\begin{equation} \label{eq:phatdot}
    \dot{\hat{p}} = \frac{a}{b(t)}\hat{q} + \frac{a}{b(t)}\kappa\,\hat{p}.
\end{equation}
It follows that
\begin{align}
    \dot{\hat{q}} = \frac{\ddot{\mathbf{r}}(t)}{a},
\end{align}
and substituting \eqref{eq:rddot} yields
\begin{align}
    \dot{\hat{q}} = -\frac{1}{ab(t)}\Big(a^2 + \dot{\beta}\big(\mathbf{x}(t)\big)\Big)\hat{p},
\end{align}
where
\begin{equation} \label{eq:betadot}
    \dot{\beta}\big(\mathbf{x}(t)\big) = a\dot{b}(t)\kappa = - (a\kappa)^2 = \dot{\beta}, 
\end{equation}
thus,
\begin{equation} \label{qhatdot}
    \dot{\hat{q}} = -\frac{a}{b(t)} \big(1-\kappa^2\big) \hat{p}.
\end{equation}
Substituting \eqref{eq:beta} into \eqref{eq:rddot} and by the definition of $\ddot{\mathbf{r}}$, we have
\begin{equation} \label{eq:initialRows}
    \big(\mathbf{u}_i(t) - \mathbf{u}_r(t)\big) \cdot \begin{bmatrix}
    \hat{p} \\ \hat{q}
    \end{bmatrix}
    = \begin{bmatrix}
    -\frac{a^2(1-\kappa^2)}{b(t)} \\ 0
    \end{bmatrix}.
\end{equation}
Next, we solve each row of \eqref{eq:initialRows} by substituting in the Euler-Lagrange equations and taking time derivatives until we have a system of ordinary differential equations that are only a function of $a$, $b(t)$, $\mu(t)$, and their derivatives.
We start by decomposing \eqref{eq:initialRows} into a system of two equations,
\begin{align}
    \big(\mathbf{u}_i(t) - \mathbf{u}_r(t)\big) \cdot\hat{p} &= - \frac{a^2}{b(t)}(1-\kappa^2),\\
    \big(\mathbf{u}_i(t) - \mathbf{u}_r(t)\big) \cdot\hat{q} &= 0,
\end{align}
where substituting \eqref{eq:el-u} and by rearranging we have
\begin{align}
    \Big(\boldsymbol{\lambda}^v(t) + \mathbf{u}_r(t) + \mu(t)\mathbf{r}(t)\Big)\cdot\hat{p} &= \frac{a^2}{b(t)}(1-\kappa^2), \label{eq:lambdava}\\
    \Big(\boldsymbol{\lambda}^v(t) + \mathbf{u}_r(t) + \mu(t)\mathbf{r}(t)\Big)\cdot\hat{q} &= 0, \label{eq:lambdavb}
\end{align}
which, by \eqref{eq:tangencyBarycentric}, simplifies to
\begin{align}
    \Big(\boldsymbol{\lambda}^v(t) + \mathbf{u}_r(t) \Big)\cdot\hat{p} &= \frac{a^2}{b(t)}(1-\kappa^2) - \mu(t)\,b(t), \label{eq:rows0a}\\
    \Big(\boldsymbol{\lambda}^v(t) + \mathbf{u}_r(t) \Big)\cdot\hat{q} &= \mu(t)\,b(t)\,\kappa. \label{eq:rows0b}
\end{align}
The next step is to take a time derivative of \eqref{eq:rows0a} and \eqref{eq:rows0b}, then substitute \eqref{eq:rows0a} and \eqref{eq:rows0b} in for the $\hat{p}$ and $\hat{q}$ terms that appear. Then we substitute \eqref{eq:el-lv} into the resulting equations and simplify, which yields 
\begin{align}
    \Big( \dot{\mathbf{u}}_r(t) - \boldsymbol{\lambda}^p(t)  \Big)\cdot\hat{p} &= 2a\mu(t)\kappa^2 - \dot{\mu}(t)b(t), \label{eq:lpp}\\
    \Big( \dot{\mathbf{u}}_r(t) - \boldsymbol{\lambda}^p(t)\Big)\cdot\hat{q} &= 
    \frac{a^3}{b(t)^2}(1-\kappa^2)^2 + \dot{\mu}(t)b(t)\kappa. \label{eq:lpq}
\end{align}
Finally, we take a time derivative of \eqref{eq:lpp} and \eqref{eq:lpq} and substitute \eqref{eq:el-lp}. Applying Assumption \ref{smp:uddot} and simplifying yields
\begin{align}
    \frac{a^4}{b(t)^3}\Big(1-\kappa^2\Big)^2 + \ddot{\mu}(t)b(t) = \frac{a^2}{b(t)}\mu(t)(1-\kappa^2) + \mu(t) a \kappa, \label{eq:ode1} \\
    a\dot{\mu}(t)(1-\kappa^2) + \dot{\mu}(t)a\kappa = \ddot{\mu}(t)b(t)\kappa + 2\frac{a^4}{b(t)^3}(1-\kappa^2)^2. \label{eq:ode2}
\end{align}
Equations \eqref{eq:ode1} and \eqref{eq:ode2} describe the evolution of $\dot{\mu}(t)$ and $b(t)$ for a given constant speed $a$ and barycentric parameter $\kappa$.

To find the optimal control input to the agent we may integrate \eqref{eq:bdot},
\begin{equation} \label{eq:b}
    b(t) = b^0 - a\kappa(t-t^0),
\end{equation}
where $b(t_1) = b^0$.
Finally, substituting \eqref{eq:betadot} and \eqref{eq:b} into \eqref{eq:rddot} yields
\begin{align}
    \ddot{\mathbf{r}}_i(t) = -\frac{a^2}{b_i^0-a\kappa(t-t^0)}(1 - \kappa^2) \hat{p} + 0 \hat{q},
\end{align}
which, by definition of the dot product, is
\begin{align}
    \ddot{\mathbf{r}}(t)\cdot\hat{p} = ||\ddot{\mathbf{r}}(t)||\,||\hat{p}||\cos(\frac{\pi}{2} - \theta_{pq}) = ||\ddot{\mathbf{r}}(t)|| \sin(\theta_{pq}) \notag\\ = ||\ddot{\mathbf{r}}(t)||\sin(\arccos(\kappa)) = ||\ddot{\mathbf{r}}(t)||\sqrt{1-\kappa^2}.
\end{align}
Thus,
\begin{equation}
    ||\ddot{\mathbf{r}}(t)|| = \frac{a^2}{b_i^0-a\kappa(t-t^0)} \sqrt{1-\kappa^2},
\end{equation}
and the orientation of $\ddot{\mathbf{r}}(t)$ is perpendicular to $\hat{q}$.
In the next subsection we use \eqref{eq:ode1} and \eqref{eq:ode2} to determine how the agent will optimally transition from the unconstrained to the constrained arc.

\subsection{Barycentric Jump Conditions} \label{sec:jump}

Let the agent transition from an unconstrained to barycentric-constrained arc at some time $t$. The jump conditions for the influence functions are \cite{Bryson1975AppliedControl},
\begin{align}
    \boldsymbol{\lambda}_p(t^-) &= \boldsymbol{\lambda}_p(t^+) + a\pi [\kappa\hat{p} + \hat{q}], \label{eq:lpJump}\\
    \boldsymbol{\lambda}_v(t^-) &= \boldsymbol{\lambda}_v(t^+) + b(t)\pi [\hat{p} + \kappa\hat{q}]. \label{eq:lvJump}
\end{align}
Substituting \eqref{eq:el-u} into \eqref{eq:lvJump} and applying continuity of $\mathbf{u}_i(t)$ implies
\begin{align}
    \mu(t^+)\hat{p} = \pi \big[ \hat{p} + \hat{q}\kappa  \big]. \label{eq:muPhat}
\end{align}
Next, we project \eqref{eq:muPhat} onto the unit vectors $\hat{p}$ and $\hat{q}$ which yields two scalar equations,
\begin{align}
    \mu(t^+) &= \pi \big [1 - \kappa^2], \\
    -\kappa\,\mu(t^+) &= \pi \big[-\kappa + \kappa \big].
\end{align}
By definition $\kappa\in(0, 1)$, which implies that $\mu(t^+) = 0$ and $\pi = 0$.
Thus we may simplify \eqref{eq:lpJump} to
\begin{equation}
    \boldsymbol{\lambda}_p(t^-) = \boldsymbol{\lambda}_p(t^+).
\end{equation}
Finally, the time derivatives of $\mathbf{u}_i(t)$ are
\begin{align}
    \dot{\mathbf{u}}_i(t^-) &= \boldsymbol{\lambda}_p(t^-), \\ 
    \dot{\mathbf{u}}_i(t^+) &= \boldsymbol{\lambda}_p(t^+) + \dot{\mu}(t^+)\,\mathbf{r}(t),
\end{align}
thus
\begin{equation}
    \dot{\mathbf{u}}(t^+) - \dot{\mathbf{u}}(t^-) = \dot{\mu}(t^+)\,\mathbf{r}(t).
\end{equation}
We may substitute $\mu(t^+) = 0$ into \eqref{eq:ode1} and \eqref{eq:ode2}, which yields
\begin{equation} \label{eq:mutPlus}
    \dot{\mu}(t^+) = \frac{a^3}{b(t)^3}\kappa\frac{\Big(1-\kappa^2\Big)^2}{1-\kappa^2 +\kappa}.
\end{equation}

Thus,
\begin{align} \label{eq:optimality2}
    \dot{\mathbf{u}}(t^+) - \dot{\mathbf{u}}(t^-) &= \mathbf{r}(t)\Bigg(\frac{a^3}{b(t)^3}\kappa\frac{\Big(1-\kappa^2\Big)^2}{1-\kappa^2 +\kappa}\Bigg).
\end{align}
At the junction between the unconstrained and constrained cases, we have four unknowns, $\mathbf{p}(t), ||\mathbf{v}(t)||$, and the optimal transition time $t$. The corresponding four equations are continuity in $\mathbf{u}(t)$, by Theorem \ref{thm:continuity}, and \eqref{eq:optimality2}.

We may then apply \eqref{eq:tangencyBarycentric} to find $\mathbf{v}(t)$ at the transition, which gives us sufficient boundary conditions to solve for the unconstrained and constrained trajectories.
In the next section we provide the optimality conditions when activating the disk constraint given in Case II of Definition \ref{def:barycentric}, which also applies to collision avoidance.

\subsection{Fixed Distance Constraint} \label{sec:fixedDistance}

Next we consider when $||\mathbf{r}(t)|| \leq D$.
The corresponding tangency conditions are
\begin{align} \label{eq:tangencyFixed}
    \mathbf{N}(\mathbf{x}, t) = \begin{bmatrix}
    \mathbf{r}(t)\cdot\mathbf{r}(t) - D^2\\
    2\mathbf{r}(t)\cdot\dot{\mathbf{r}}(t)
    \end{bmatrix}, \\
    g^{(2)}\big(\mathbf{x}(t), t\big) = 2\mathbf{r}(t)\cdot\ddot{\mathbf{r}}(t) + 2\dot{\mathbf{r}}(t)\cdot\dot{\mathbf{r}}(t),
\end{align}
which leads to the same analysis as Section \ref{sec:barycentric} when $\mathbf{N}\big(\mathbf{x}(t), t) = \mathbf{0}$ and $g^{(2)}\big(\mathbf{x}(t), t\big) = 0$ if we impose $b(t)=D$ and $\kappa = 0$. In this case \eqref{eq:ode2} implies that $\mu(t)$ is a constant, and \eqref{eq:ode1} implies that
\begin{equation}
    \mu = \Big(\frac{a}{D}\Big)^2.
\end{equation}

The derivative of $\mathbf{N}$ with respect to the state is
\begin{equation}
    \frac{\partial\mathbf{N}(\mathbf{x}(t), t)}{\partial{\mathbf{x}}(t)} = \begin{bmatrix}
    2\,\mathbf{r}(t)^T & 0 \\
    2\,\dot{\mathbf{r}}(t)^T & 2\,\mathbf{r}(t)^T
    \end{bmatrix},
\end{equation}
and the jump conditions at time $t$ are \cite{Bryson1975AppliedControl},
\begin{align}
    \boldsymbol{\lambda}_p(t^-) &= \boldsymbol{\lambda}_p(t^+) + 2\big[ \pi_1\mathbf{r}(t) + \pi_2\dot{\mathbf{r}}(t) \big], \label{eq:lpJumpS}\\
    \boldsymbol{\lambda}_v(t^-) &= \boldsymbol{\lambda}_v(t^+) + 2\big[ \pi_2\mathbf{r}(t),  \big] \label{eq:lvJumpS}
\end{align}
The condition $\frac{\partial H}{\partial u}|_{t^-} = \frac{\partial H}{\partial u}|_{t^+}$ implies
\begin{equation}
    \mathbf{u}(t^-) + \boldsymbol{\lambda}_v(t^-) = \mathbf{u}(t^+) + \boldsymbol{\lambda}_v(t^+) + \mu(t^+)\mathbf{r}(t),
\end{equation}
which, by Theorem \ref{thm:continuity}, implies
\begin{equation}
\boldsymbol{\lambda}_v(t^-) = \boldsymbol{\lambda}_v(t^+) + \mu(t^+)\mathbf{r}(t).
\end{equation}
Thus, by \eqref{eq:lvJumpS},
\begin{equation} \label{eq:pi2}
    \pi_2 = \frac{\mu(t^+)}{2} = \frac{a^2}{2D^2}.
\end{equation}
Combining the time derivative of \eqref{eq:el-u} with \eqref{eq:el-lv} and substituting it along with \eqref{eq:pi2} into \eqref{eq:lpJumpS} implies
    $\dot{\mathbf{u}}(t^-) = \dot{\mathbf{u}}(t^+) + 2D\pi_1\mathbf{r}(t),$
which, projected onto $\hat{q}$ yields
\begin{align} \label{eq:uDotQ}
    \dot{\mathbf{u}}(t^-)\cdot\hat{q} &= \dot{\mathbf{u}}(t^+)\cdot\hat{q}.
\end{align}

Thus, we have three unknowns: the angle of $\mathbf{r}(t)$, the relative speed $a = ||\dot{\mathbf{r}}(t)||$, and the optimal transition time $t$.
The three corresponding equations are continuity in $\mathbf{u}(t)$ by Theorem \ref{thm:continuity} and the continuity of $\dot{\mathbf{u}}(t)\cdot\hat{q}$ by \eqref{eq:uDotQ}.
This is sufficient to determine $\mathbf{r}(t)$ and $\dot{\mathbf{r}}(t)$ using the tangency conditions \eqref{eq:tangencyFixed}, which determines the trajectory of the agent along the fixed distance constraint.

Finally, we note that the in a swarm system, for two agents $i, j\in\mathbb{N}$, we may write a safe distance constraint for agent $i$ relative to agent $j$,
\begin{equation}
g_{ij}\big(\mathbf{x}(t), t\big) = ||\mathbf{p}_i(t) - \mathbf{p}_j(t)||^2 \geq \big(2R\big)^2,    
\end{equation}
for some agent radius $R$.
This results in tangency conditions identical to \eqref{eq:tangencyFixed} when the constraint is active, i.e., $\mathbf{N}\big(\mathbf{x}(t), t\big) = 0$.
Thus, the preceding analysis holds for the transition to the collision avoidance constraint if $j$'s trajectory satisfies Assumption \ref{smp:uddot}.

\section{Conclusions and Future Work} \label{sec:conclusion}

In this paper, we presented a proof of continuity of the control input for a class of energy-minimizing systems when transitioning between constrained and unconstrained trajectories.
We extended this result to include the case where the constraint becomes discontinuous at a point, and finally, we proposed an original barycentric motion for constraint-driven optimal control.
We derived the optimal control input when traveling along the constrained arc and derived the optimality conditions for transitioning to the constrained arc as a function of the and control variables.
Finally, we extended our analysis to include collision avoidance between agents.

Ongoing research addresses the potential of deriving an efficient shooting method to numerically solve the proposed jump conditions in real-time.
Future work should consider extending Theorem \ref{thm:continuity} for the cases where (1) multiple constraints become active simultaneously, (2)  the agent transitions between different constrained arcs, (3) only the right partial derivatives of $\mathbf{N}(\mathbf{x}(t), t)$ exist, and (4) a constraint becomes active only at a single instant in time.
Another potential direction for future research is to include additional agent interactions to achieve a desired emergent behavior from the system \cite{Beaver2020BeyondFlocking}.

\bibliographystyle{IEEEtran}
\bibliography{mendeley, IDS_Pubs}

\end{document}